
\documentclass[reqno,a4paper,12pt]{amsart}

\usepackage[utf8]{inputenc}
\usepackage[T1]{fontenc}
\usepackage[english]{babel}
\usepackage[babel]{microtype}
\usepackage[a4paper, margin=1.1in]{geometry}
\usepackage[a4paper]{geometry}
\usepackage[dvipsnames]{xcolor}
\usepackage{amsmath,amssymb,amsthm}
\usepackage{amsrefs}
\usepackage{mathrsfs}
\usepackage{mathtools}
\usepackage{commath}
\usepackage{thmtools}
\usepackage{thm-restate}
\usepackage{bbm}
\usepackage{dsfont}
\usepackage{lmodern}
\usepackage{fixcmex}
\usepackage{cases}
\usepackage{enumitem}
\setlist[enumerate,1]{label=(\roman*)}
\usepackage{centernot}
\usepackage{booktabs}
\usepackage{multirow}
\usepackage{float}
\usepackage[pdfborderstyle={/S/U/W 0},unicode]{hyperref}
\hypersetup{
    colorlinks=true,
    linkcolor=blue!85!black,
    citecolor=blue!85!black,
    urlcolor=blue!85!black,
}
\usepackage{cleveref}
\usepackage{etoolbox}

\linespread{1.07}
\arraycolsep 1pt

\declaretheoremstyle[
  headfont=\normalfont\bfseries,
  bodyfont=\normalfont,
]{noital}

\declaretheorem[style=plain,numberwithin=section,name=Theorem]{theorem}
\declaretheorem[style=plain,sibling=theorem,name=Proposition]{proposition}
\declaretheorem[style=plain,sibling=theorem,name=Lemma]{lemma}

\newcommand{\indef}[1]{\emph{#1}}
\newcommand{\defined}{\mathrel{\coloneqq}}

\DeclarePairedDelimiter{\p}{\lparen}{\rparen}

\renewcommand{\leq}{\leqslant}

\renewcommand{\geq}{\geqslant}
\newcommand{\st}{\mathbin{\colon}}
\undef{\set}
\DeclarePairedDelimiter{\set}{\lbrace}{\rbrace}
\undef{\emptyset}
\newcommand{\emptyset}{\varnothing}
\DeclarePairedDelimiter{\card}{\lvert}{\rvert}

\DeclareMathOperator{\supp}{supp}

\newcommand{\RR}{\mathbb{R}}
\newcommand{\cS}{\mathcal{S}}
\newcommand{\bfx}{\mathbf{x}}
\newcommand{\bfy}{\mathbf{y}}
\newcommand{\bfz}{\mathbf{z}}


\begin{document}

\title{Localised graph Maclaurin inequalities}

\author{Lucas Aragão}
\address{IMPA, Estrada Dona Castorina 110, Jardim Botânico, Rio de Janeiro, RJ, Brasil}
\email{l.aragao@impa.br}

\author{Victor Souza}
\address{Department of Pure Mathematics and Mathematical Statistics, University of Cambridge, Cambridge, United Kingdom}
\email{vss28@cam.ac.uk}

\begin{abstract}
The Maclaurin inequalities for graphs are a broad generalisation of the classical theorems of Turán and Zykov.
In a nutshell they provide an asymptotically sharp answer to the following question: what is the maximum number of cliques of size $q$ in a $K_{r+1}$-free graph with a given number of cliques of size $s$? 
We prove an extensions of the graph Maclaurin inequalities with a weight function that captures the local structure of the graph.
As a corollary, we settle a recent conjecture of Kirsch and Nir, which simultaneously encompass the previous localised results of Bradač, Malec and Tompkins and of Kirsch and Nir.
\end{abstract}

\maketitle


\section{Introduction}
\label{sec:intro}

One of the foundational results in extremal graph theory is Turán's theorem~\cite{Turan1941-fi}, which states that a graph $G$ that is $K_{r+1}$-free cannot have more edges than a balanced complete $r$-partite graph.
Zykov~\cite{Zykov1949-qr} later showed that these graphs also maximise the number of copies of $K_q$ among $K_{r+1}$-free graphs.

Denote by $K_s(G)$ the set of $s$-cliques in $G$ and $k_s(G) = \card{K_s(G)}$.
If $G$ is $K_{r+1}$-free, Zykov's theorem gives
\begin{equation}
\label{eq:zykov}
    k_q(G) \leq \binom{r}{q} \p[\Big]{\frac{k_1(G)}{r}}^q.
\end{equation}
The \emph{graph Maclaurin inequalities} are a broad extension of \eqref{eq:zykov}.
Indeed, they state that if $G$ is $K_{r+1}$-free, then
\begin{equation}
\label{eq:khadzhiivanov}
    \frac{k_1(G)}{\binom{r}{1}}
    \geq \p[\bigg]{\frac{k_2(G)}{\binom{r}{2}}}^{1/2}
    \geq \dotsb
    \geq \p[\bigg]{\frac{k_r(G)}{\binom{r}{r}}}^{1/r}.
\end{equation}
While Khadzhiivanov~\cite{Khadzhiivanov1977-xg} was the first to prove this result, his original proof had a gap, later filled by Nikiforov~\cite{Nikiforov2006-us}.
This inequality was also rediscovered and reproved by Sós and Straus~\cite{Sos1982-as}, by Fisher and Ryan~\cite{Fisher1992-is} and by Petingi and Rodriguez~\cite{Petingi2000-qw}.

Using \eqref{eq:khadzhiivanov}, one can address the following question: for $s < q \leq r$, what is the maximum number of copies of $K_q$ that an $K_{r+1}$-free graph with a given number of copies of $K_s$ can have?
Turán's theorem gives the exact answer for $s=1$ and $q=2$ and Zykov's theorem for $s=1$ and $q \geq 2$.
Eckhoff~\cite{Eckhoff2004-fe} and Frohmander~\cite{Frohmader2008-yx} gave further exact results.
Inequality \eqref{eq:khadzhiivanov} is asymptotically sharp and gives the precise answer under certain divisibility conditions.
Our main result is an strengthening of \eqref{eq:khadzhiivanov}.

\begin{theorem}
\label{thm:localised-khadzhiivanov}
Given a graph $G$ and integers $1 \leq s \leq q$, we have
\begin{equation}
\label{eq:localised-khadzhiivanov}
    \sum_{I \in K_q(G)} \binom{\sigma(I)}{s}^{q/s} \binom{\sigma(I)}{q}^{-1} \leq k_s(G)^{q/s},
\end{equation}
where $\sigma(I)$ is  the size of the largest clique in $G$ containing $I$.
Moreover, equality holds only when the subgraph of $G$ induced on the set of vertices that belong to an $s$-clique is a complete multipartite graph with equal parts.
\end{theorem}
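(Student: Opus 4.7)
My plan is to derive \eqref{eq:localised-khadzhiivanov} by combining a \emph{local} application of the classical Maclaurin inequality (for real numbers) inside each largest clique of $G$ with a \emph{global} aggregation step via Hölder's inequality. A convenient reformulation is the vertex-weighted polynomial strengthening: for any non-negative weights $(t_v)_{v \in V(G)}$,
\[
    \sum_{I \in K_q(G)} \Bigl(\prod_{v \in I} t_v\Bigr) \frac{\binom{\sigma(I)}{s}^{q/s}}{\binom{\sigma(I)}{q}} \leq \Bigl(\sum_{J \in K_s(G)} \prod_{v \in J} t_v\Bigr)^{q/s}.
\]
At $t \equiv 1$ this recovers \eqref{eq:localised-khadzhiivanov}, and the weighted form is equivalent to the unweighted one by a standard blowup argument (since $\sigma(I)$ is unchanged by vertex blowups). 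Working with arbitrary non-negative weights $t_v$ makes the proof more flexible and allows one to isolate the ``local contribution'' of each $q$-clique.

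\textbf{Key local step.} For each $q$-clique $I$, fix a clique $M_I$ of size $\sigma(I)$ containing $I$. Since $G[M_I]$ is complete, classical Maclaurin for the real numbers $(t_v)_{v \in M_I}$ yields
\[
    \Bigl(\sum_{\substack{J \subseteq M_I\\|J|=s}} \prod_{v \in J} t_v\Bigr)^{q/s} \geq \frac{\binom{\sigma(I)}{s}^{q/s}}{\binom{\sigma(I)}{q}} \sum_{\substack{I' \subseteq M_I\\|I'|=q}} \prod_{v \in I'} t_v.
\]
This provides exactly the factor $\binom{\sigma(I)}{s}^{q/s}/\binom{\sigma(I)}{q}$ multiplying a local $q$-clique mass, in exchange for the $(q/s)$-power of a local $s$-clique mass. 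The remaining task is to aggregate these local bounds so that the LHS of the weighted inequality appears on the left while $\bigl(\sum_{J \in K_s(G)} \prod_{v \in J} t_v\bigr)^{q/s}$ appears on the right. The aggregation proceeds by selecting non-negative weights $\lambda_I$ per $q$-clique and invoking Hölder's inequality with conjugate exponents $q/s$ and $q/(q-s)$, so that the inner $s$-clique sums inside the various $M_I$'s assemble into the global $s$-clique sum raised to the correct power.

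\textbf{Main obstacle and equality case.} The central technical challenge is in the aggregation: different largest cliques $M_I$ overlap, so the local $s$-clique sums are not disjoint and a naive combination overcounts. The weighting $\lambda_I$ must be calibrated to produce exactly the coefficient $\binom{\sigma(I)}{s}^{q/s}/\binom{\sigma(I)}{q}$ on the left, while the Hölder aggregation on the right does not exceed $\bigl(\sum_{J \in K_s(G)} \prod_{v \in J} t_v\bigr)^{q/s}$. Making this balance work relies on $\sigma(I)$ being the \emph{largest} containing clique (rather than an arbitrary one), which provides a canonical $M_I$ for each $q$-clique and pins down the combinatorial overlap precisely. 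For the equality case, Maclaurin's equality condition forces $(t_v)_{v \in M_I}$ to be constant, and Hölder's equality condition forces proportionality of the relevant ratios across $q$-cliques; unpacking these constraints shows that the subgraph of $G$ induced on the support of $s$-cliques must be a complete multipartite graph with equal parts, as stated.
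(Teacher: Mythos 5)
Your reduction to the weighted form via blowups is fine (it is the paper's \Cref{prop:sidorenko} run in reverse), and your local step is a correct application of classical Maclaurin inside a single clique. But the local step is not where the difficulty of the theorem lies, and the aggregation step --- which you yourself call ``the central technical challenge'' --- is left entirely unspecified. This is a genuine gap, not a deferrable detail, and there is concrete evidence it cannot be closed by a routine H\"older calibration. Test the scheme with $s=1$, $q=2$, $t\equiv 1$, and $G$ equal to two triangles sharing an edge (vertices $1,2,3,4$, all pairs adjacent except $14$). Every edge has $\sigma=3$ and $\rho_{1,2}(3)=3$, so the claim is $5\cdot 3=15\le 16=k_1(G)^2$. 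Your local bounds, one per maximal triangle, read
\begin{equation*}
(t_1+t_2+t_3)^2\ \ge\ 3(t_1t_2+t_1t_3+t_2t_3),\qquad (t_2+t_3+t_4)^2\ \ge\ 3(t_2t_3+t_2t_4+t_3t_4),
\end{equation*}
and both are needed to reach all five edges. Summing them gives $18\ge 15+3t_2t_3=18$ at $t\equiv 1$, while the global right-hand side is only $(t_1+t_2+t_3+t_4)^2=16$: the sum of the local left-hand sides already exceeds the global target. The slack you must recover is exactly the overcount of $s$-cliques lying in several maximal cliques, so any choice of weights $\lambda_I$ forces you to prove a nontrivial ``defect'' inequality comparing $\sum_M h_{s,M}(t)^{q/s}$-type quantities with $h_{s,G}(t)^{q/s}$ over overlapping maximal cliques $M$; no argument for this is given, and for graphs with many mutually overlapping maximal cliques it is not clear one exists along these lines.

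For comparison, the paper sidesteps local-to-global aggregation entirely: it maximises $f_{s,q,G}$ over the constraint set $h_{s,G}(\bfx)=1$ (which is unbounded for $s\ge 2$, so attainment of the supremum must be proved by induction on the number of vertices --- \Cref{lem:attain}, the delicate point Nikiforov had to add to Khadzhiivanov's argument), then uses a Lagrange-multiplier/Zykov symmetrisation step (\Cref{lem:zykov}) to move an optimum onto a single clique, and only there invokes classical Maclaurin once. Your equality analysis is likewise only asserted; in the paper it requires rerunning the symmetrisation carefully (\Cref{prop:localised-maclaurin-equality}). As it stands, the proposal is a plan whose key step is missing.
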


Indeed, \Cref{thm:localised-khadzhiivanov} generalises \eqref{eq:khadzhiivanov}.
To see this, note that if $G$ is $K_{r+1}$-free, then $\binom{r}{s}^{q/s} \binom{r}{q}^{-1} \leq \binom{\sigma(I)}{s}^{q/s} \binom{\sigma(I)}{q}^{-1}$ for all $I \in K_q(G)$, as the function $t \mapsto \binom{t}{s}^{q/s} / \binom{t}{q}$ is decreasing for $t \geq q$.

An important feature \Cref{thm:localised-khadzhiivanov} is the local nature of the function $\sigma(I)$.
This result fits in an ongoing enterprise to show similarly \indef{localised} versions of results in extremal combinatorics.
The case $s = 1$ and $q = 2$, a localised version of Turán's theorem, was proposed in 2022 by Balogh and Lidický in a Oberwolfach~\cite{Krivelevich2022-sa} problem session.
Soon after, this case was settled independently by Bradač~\cite{Bradac2022-uj} and by Malec and Tompkins~\cite{Malec2022-oc}.
The full case $s = 1$, a localised version of Zykov's theorem, was then proven analytically by Kirsch and Nir~\cite{Kirsch2023-iq}.
They also conjectured in~\cite{Kirsch2023-iq}*{Conjecture 6.1} the case $s = 2$, which we settle in greater generality.

To prove \Cref{thm:localised-khadzhiivanov}, we follow the strategy of Nikiforov and Khadzhiivanov, building upon the Motzkin-Straus~\cite{Motzkin1965-rx} analytical proof of Turán's theorem.
We now review some aspects of this analytical approach.

For $\bfx = (\bfx_v)_{v \in V} \in \RR^V$, write $\bfx \geq 0$ (or $\bfx > 0$) if $\bfx_v \geq 0$ (or $\bfx_v > 0$) for all $v \in V$.
For a set $I \subseteq V$, denote the product $\bfx_I \defined \prod_{v \in I} \bfx_v$.
Given a graph $G$ and an integer $s$, define the following homogeneous polynomial
\begin{equation*}
    h_{s,G}(\bfx) \defined \sum_{J \in K_s(G)} \bfx_{J}.
\end{equation*}
The following inequalities appear in the work of Khadzhiivanov~\cite{Khadzhiivanov1977-xg}.
If $G$ is a $K_{r+1}$-free graph and $\bfx \geq 0$, then
\begin{equation}
\label{eq:maclaurin}
    \frac{h_{1,G}(\bfx)}{\binom{r}{1}}
    \geq \p[\bigg]{\frac{h_{2,G}(\bfx)}{\binom{r}{2}}}^{1/2}
    \geq \dotsb
    \geq \p[\bigg]{\frac{h_{r,G}(\bfx)}{\binom{r}{r}}}^{1/r}.
\end{equation}
Applying \eqref{eq:maclaurin} with $\bfx = 1$ (i.e. $\bfx_v = 1$ for all $v \in V$), we recover \eqref{eq:khadzhiivanov}.
In the case that $G = K_n$, the functions $h_{s,K_n}$ are the elementary symmetric polynomials, and for $r = n$, the inequalities \eqref{eq:maclaurin} are the classical Maclaurin inequalities (see \cite{Hardy1988-eh}*{p. 52}).
For this reason, we refer to \eqref{eq:maclaurin} (and \eqref{eq:khadzhiivanov}) as a Maclaurin inequality for graphs.
Our main technical result is the following.

\begin{theorem}
\label{thm:localised-maclaurin}
Given a graph $G$ and $1 \leq s \leq q$ and define
\begin{equation*}
    f_{s,q,G}(\bfx) \defined \sum_{I \in K_q(G)} \binom{\sigma(I)}{s}^{q/s} \binom{\sigma(I)}{q}^{-1}\bfx_I.
\end{equation*}
Then, for every $\bfx \geq 0$, we have
\begin{equation}
\label{eq:localised-maclaurin}
    f_{s,q,G}(\bfx) \leq h_{s,G}(\bfx)^{q/s}.
\end{equation}
Moreover, equality holds for $\bfx > 0$ only when the subgraph of $G$ induced on the set of vertices that belong to an $s$-clique is a complete $\ell$-partite graph with parts $V_1, \dotsc, V_\ell$, for some $\ell \geq q$, and $\sum_{v \in V_i} \bfx_v = \sum_{u \in V_j} \bfx_u$ for all $1 \leq i,j \leq \ell$.
\end{theorem}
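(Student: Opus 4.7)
The plan is to follow the variational approach of Motzkin-Straus and Nikiforov-Khadzhiivanov, reducing the localised inequality to the classical Maclaurin inequality applied to a complete multipartite structure. Since both sides of \eqref{eq:localised-maclaurin} are homogeneous of degree $q$ in $\bfx$, it suffices to establish the inequality on the slice $\set{\bfx \geq 0 : h_{s,G}(\bfx) = 1}$ and show that $\sup f_{s,q,G} \leq 1$ there. A routine compactness argument (allowing coordinates to approach the boundary) then reduces the problem to analysing a maximiser $\bfx^*$, and the proof proceeds by induction on $\card{V(G)}$.

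The main ingredient driving the induction is a monotonicity observation: for any induced subgraph $G' \subseteq G$ and any $I \in K_q(G')$, one has $\sigma_{G'}(I) \leq \sigma_G(I)$. Combined with the fact --- already used in the excerpt to see that \Cref{thm:localised-khadzhiivanov} generalises \eqref{eq:khadzhiivanov} --- that $t \mapsto \binom{t}{s}^{q/s}\binom{t}{q}^{-1}$ is decreasing on $t \geq q$, this yields $f_{s,q,G-v}(\bfx^*) \geq f_{s,q,G}(\bfx^*)$ whenever $\bfx^*_v = 0$. Because $h_{s,G-v}(\bfx^*) = h_{s,G}(\bfx^*)$ in this situation, the induction hypothesis applied to the smaller graph $G - v$ closes the case of boundary maximisers.

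The remaining case of an interior maximiser, $\bfx^* > 0$, calls for the Lagrange/KKT condition: there exists a multiplier $\lambda$ (forced to equal $q\, f_{s,q,G}(\bfx^*)/s$ by Euler's identity on the homogeneous polynomials $f_{s,q,G}$ and $h_{s,G}$) for which $\partial_v f_{s,q,G}(\bfx^*) = \lambda\, \partial_v h_{s,G}(\bfx^*)$ at every $v \in V(G)$. I would then exploit the simple but crucial observation that when $u$ and $v$ are non-adjacent, no clique of $G$ contains both, so both $h_{s,G}$ and $f_{s,q,G}$ depend affinely on any shift $(\bfx_u, \bfx_v) \mapsto (\bfx_u + t, \bfx_v - t)$. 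Combining such pairwise shifts with the stationarity equations, one argues that non-adjacency is transitive on $V(G)$, and hence that $G$ is complete $\ell$-partite on parts $V_1, \dotsc, V_\ell$ for some $\ell \geq q$.

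Once $G$ has this structure, the aggregated weights $y_i \defined \sum_{v \in V_i} \bfx^*_v$ satisfy $h_{s,G}(\bfx^*) = e_s(\bfy)$ and $f_{s,q,G}(\bfx^*) = \binom{\ell}{s}^{q/s}\binom{\ell}{q}^{-1} e_q(\bfy)$, where $e_k$ denotes the $k$th elementary symmetric polynomial. The bound \eqref{eq:localised-maclaurin} then collapses to the classical Maclaurin inequality $\p[\big]{e_q(\bfy)/\binom{\ell}{q}}^{1/q} \leq \p[\big]{e_s(\bfy)/\binom{\ell}{s}}^{1/s}$, which is sharp if and only if $y_1 = \dotsb = y_\ell$, matching the stated equality case. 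The main obstacle is therefore the third step: turning pairwise KKT conditions into a global complete multipartite structure. Because $f_{s,q,G}$ and $h_{s,G}$ are individually affine along a single non-adjacent shift, no single linear perturbation can contradict stationarity; one must chain together several shifts and leverage the nonlinearity coming from the exponent $q/s$ on $h_{s,G}^{q/s}$ to propagate non-adjacency into a full block decomposition of $G$.
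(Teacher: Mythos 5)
Your overall strategy --- normalise to $h_{s,G}(\bfx)=1$, pass to a maximiser, induct on the number of vertices, and finish with the classical Maclaurin inequality --- is the same as the paper's, and your boundary-case reduction (delete a vertex with $\bfx^*_v=0$, using that $\sigma_{G-v}(I)\le\sigma_G(I)$ and that $\rho_{s,q}$ is decreasing) is sound. But there are two genuine gaps. First, the existence of a maximiser is not ``a routine compactness argument'': for $s\ge 2$ the constraint set $\set{\bfx\ge 0 \st h_{s,G}(\bfx)=1}$ is unbounded (already for $G=K_2$, $s=2$ it is a branch of a hyperbola), so coordinates can escape to infinity rather than merely approach the boundary of the nonnegative orthant. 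This is exactly the gap in Khadzhiivanov's original argument that Nikiforov had to repair, and the paper spends all of \Cref{sec:attain} on it (\Cref{lem:attain}): one shows that if $\bfx^{(i)}_v\to\infty$ along a maximising sequence then some $u$ in a common $s$-clique has $\bfx^{(i)}_u\to 0$, and one then compares with $G-u$ by induction. Your proposal has no substitute for this step, and your entire case analysis presupposes that the supremum is attained.

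Second, your treatment of an interior maximiser rests on the claim that stationarity forces non-adjacency to be transitive, i.e.\ that $G$ itself must be complete multipartite. You correctly flag this as the main obstacle, but you do not prove it, and it is not the right deduction: the mere existence of an interior maximiser does not constrain the global structure of $G$ (the inequality holds, usually strictly, for arbitrary $G$). What the affine shift along a non-adjacent pair $u,v$ actually yields --- this is the paper's \Cref{lem:zykov} --- is another maximiser $\bfy$ with $\bfy_u=0$, i.e.\ a maximiser of strictly smaller support; iterating, one obtains a maximiser supported on a clique $K_R$, where $\rho_{s,q}(\sigma_G(I))\le\rho_{s,q}(R)$ and the classical inequality \eqref{eq:maclaurin} on $K_R$ finishes the proof. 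The complete multipartite structure is a conclusion of the \emph{equality} analysis, not an ingredient of the inequality, and even there it requires more than chaining shifts: the paper collapses each anti-component to a point, invokes the equality case of \eqref{eq:maclaurin} to deduce $\sigma(I)=\ell$ for all canonical cliques, and only then shows each anti-component is independent. Replacing your third step by this support-reduction argument would both close the gap and simplify the proof.
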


To obtain \Cref{thm:localised-khadzhiivanov} from \Cref{thm:localised-maclaurin}, just take $\bfx = 1$.
At first glance \eqref{eq:localised-maclaurin} seems stronger than \eqref{eq:localised-khadzhiivanov}, but they are in fact equivalent, as we will see in \Cref{prop:sidorenko}.


\section{Localised inequalities for clique counts}

For a graph $G = (V,E)$, the \indef{clique number} $\omega(G)$ is the size of its largest clique.
For a subset $S \subseteq V$, denote by $G[S]$ the subgraph of $G$ spanned by $S$.
If a subset $I \subseteq V$ spans a clique, we denote by $\sigma_G(I)$ the size of the largest clique in $G$ containing $I$.
We omit the subscript and write $\sigma(I)$ whenever $G$ is clear from context.

Recall that for $\bfx = (\bfx_v)_{v \in V} \in \RR^V$, write $\bfx \geq 0$ if $\bfx_v \geq 0$ for all $v \in V$ and $\bfx > 0$ analogously.
The support of $\bfx$ is the set $\supp \bfx \defined \set{v \in V \st \bfx_v \neq 0}$.
For a set $I \subseteq V$, recall that $\bfx_I \defined \prod_{v \in I} \bfx_v$.
For integers $1 \leq s \leq q$, consider the function
\begin{equation}
\label{eq:weight}
    f_{s,q,G}(\bfx) \defined \sum_{I \in K_q(G)} \rho_{s,q}(\sigma(I)) \bfx_{I},
\end{equation}
where $\rho_{s,q}$ is defined, for $t \geq s$, as
\begin{equation*}
    \rho_{s,q}(t) \defined \binom{t}{s}^{q/s} \binom{t}{q}^{-1}.
\end{equation*}
Note that in \eqref{eq:weight}, $\rho_{s,q}(t)$ is only evaluated for $t \geq q$.
It is important to note that in this range, $\rho_{s,q}(t)$ is a decreasing function of $t$.
Indeed, one can write $\rho_{s,q}^s$ as a product of $sq$ functions, each of which is non-increasing in $t \geq q$.
Recall from the introduction that
\begin{equation*}
    h_{s,G}(\bfx) \defined \sum_{J \in K_s(G)} \bfx_{J}.
\end{equation*}
Note that $f_{s,q,G}$ is homogeneous of degree $q$ and $h_{s,G}$ homogeneous of degree $s$.
Moreover, for $\bfx > 0$, if $h_{s,G}(\bfx) = 0$, then $G$ has no $s$-clique, and thus $f_{s,q,G}(\bfx) = 0$ as well.
Therefore, to show that $f_{s,q,G}(\bfx) \leq h_{s,G}(\bfx)^{q/s}$ for all $\bfx \geq 0$, it is enough to do so restricted to
\begin{equation*}
    \cS_{s,G} = \set[\Big]{\bfx \in \RR^V \st \bfx \geq 0  \,,\,  h_{s,G}(\bfx) = 1}.
\end{equation*}
The inequality \eqref{eq:localised-maclaurin} in \Cref{thm:localised-maclaurin} is then equivalent to the following proposition.

\begin{proposition}
\label{prop:localised-maclaurin-inequality}
Given a graph $G$ and $1 \leq s \leq q \leq \omega(G)$, we have
\begin{equation}
\label{eq:homogeneus}
    f_{s,q,G}(\bfx) \leq 1,
\end{equation}
for every $\bfx \in S_{s,G}$.
\end{proposition}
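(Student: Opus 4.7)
The plan is to follow the Motzkin--Straus analytical approach used by Khadzhiivanov~\cite{Khadzhiivanov1977-xg} and Nikiforov~\cite{Nikiforov2006-us}: I will show that the maximum of $f_{s,q,G}$ on $\cS_{s,G}$ is attained at a point whose support is a clique of $G$, whereupon the bound reduces to the classical Maclaurin inequality on that clique. First, note that any vertex $v$ lying in no $s$-clique appears in no term of either $f_{s,q,G}$ or $h_{s,G}$, so it suffices to consider vectors $\bfx$ with $\bfx_v = 0$ on such vertices. On the remaining subspace $\cS_{s,G}$ is compact, and so $f_{s,q,G}$ attains a maximum at some $\bfx^* \in \cS_{s,G}$.

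The main step is a mass-transfer reduction that shrinks the support of a maximiser without decreasing the objective. Suppose $u, v \in \supp \bfx^*$ are non-adjacent in $G$. Since no clique of $G$ contains both, $h_{s,G}$ and $f_{s,q,G}$ are both multilinear in $(\bfx_u, \bfx_v)$ with no cross-term; at $\bfx^*$ we may write $h_{s,G}(\bfx) = R_u \bfx_u + R_v \bfx_v + S$ and $f_{s,q,G}(\bfx) = P_u \bfx_u + P_v \bfx_v + Q$ with coefficients independent of $\bfx_u, \bfx_v$. If $R_u = 0$, then for every $s$-clique $J$ containing $u$ some vertex of $J \setminus \set{u}$ carries zero weight, and since every $q$-clique through $u$ contains such an $s$-clique as a subset, $P_u = 0$ too, so one may simply set $\bfx^*_u = 0$; the same applies to $v$. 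Otherwise $R_u, R_v > 0$, and the perturbation increasing $\bfx^*_u$ by $t R_v$ and decreasing $\bfx^*_v$ by $t R_u$ preserves $h_{s,G} = 1$ while $f_{s,q,G}$ changes linearly as $t(P_u R_v - P_v R_u)$. Linearity together with maximality at $t = 0$ in the interior of the feasible interval forces this coefficient to vanish, so the whole line consists of maximisers; pushing $t$ to the boundary of the non-negative orthant zeroes out $\bfx^*_u$ or $\bfx^*_v$. Iterating produces a maximiser supported on a clique $I \subseteq V(G)$ of some size $k$.

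If $k < q$ then $\supp \bfx^*$ contains no $q$-clique and $f_{s,q,G}(\bfx^*) = 0 \leq 1$, as required. Otherwise $\sigma_G(I') \geq k \geq q$ for every $I' \in \binom{I}{q}$, so the monotonicity of $\rho_{s,q}$ on $[q, \infty)$ yields
\begin{equation*}
    f_{s,q,G}(\bfx^*) \leq \rho_{s,q}(k) \sum_{I' \in \binom{I}{q}} \bfx^*_{I'} = \rho_{s,q}(k)\, e_q\p{\bfx^*|_I},
\end{equation*}
where $e_q$ denotes the $q$-th elementary symmetric polynomial in the $k$ coordinates indexed by $I$. Every $s$-subset of the clique $I$ is an $s$-clique of $G$, so $h_{s,G}(\bfx^*) = e_s(\bfx^*|_I) = 1$, and the classical Maclaurin inequality gives $e_q(\bfx^*|_I) \leq \binom{k}{q} \binom{k}{s}^{-q/s}$. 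Multiplying by $\rho_{s,q}(k) = \binom{k}{s}^{q/s} \binom{k}{q}^{-1}$ collapses everything to $1$.

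I expect the main hurdle to be the mass-transfer step: handling the degenerate case $R_u = 0$ cleanly, verifying that the support strictly shrinks at each iteration, and ensuring that the line of maximisers really does reach the boundary of the non-negative orthant. Once the support is a clique, the weight $\rho_{s,q}$ is engineered so that the binomial factors cancel perfectly against the classical Maclaurin inequality, which is the routine part of the argument.
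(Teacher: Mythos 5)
Your overall strategy --- symmetrise a maximiser onto a clique, then invoke the classical Maclaurin inequality, for which the weight $\rho_{s,q}$ is engineered to cancel --- is exactly the paper's, and your mass-transfer step (explicit multilinearity in $(\bfx_u,\bfx_v)$ with no cross-term, forcing the linear coefficient to vanish at an interior maximum) is a sound variant of the paper's Lagrange-multiplier argument in Lemma~\ref{lem:zykov}. The second half of your argument, once a maximiser supported on a clique is in hand, is correct.

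The genuine gap is the sentence ``On the remaining subspace $\cS_{s,G}$ is compact.'' This is false for every $s \geq 2$, even after discarding vertices that lie in no $s$-clique: already for $G = K_2$ and $s = 2$ the constraint $h_{2,G}(\bfx) = \bfx_u \bfx_v = 1$ defines a hyperbola in the positive quadrant, which is closed but unbounded, and the same phenomenon persists in general since one coordinate of an $s$-clique can be sent to infinity while another compensates. Consequently the existence of a maximiser $\bfx^\ast$ --- on which your entire reduction rests --- does not follow from compactness. This is precisely the gap in Khadzhiivanov's original argument that Nikiforov identified, and the paper devotes all of Section~\ref{sec:attain} (Lemma~\ref{lem:attain}) to repairing it: one first shows $f_{s,q,G}$ is bounded on $\cS_{s,G}$ via an AM--GM estimate, then argues by induction on $\card{V(G)}$ that a maximising sequence either has a convergent subsequence or can be pushed onto the smaller graph $G - u$ for a suitable vertex $u$ whose weights tend to $0$, where the inductive hypothesis supplies a maximiser. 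Without this (or some substitute, e.g.\ a genuine compactification argument), your proof establishes only that \emph{if} the supremum is attained then it is at most $1$, which is the same incomplete state Khadzhiivanov's proof was in.
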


We defer the discussion of the case of equality in \Cref{thm:localised-maclaurin} to \Cref{sec:equality}.
The goal of this section is to prove \Cref{prop:localised-maclaurin-inequality}.
For convenience, we denote
\begin{equation}
\label{eq:constant}
    M_{s,q,G} \defined \sup_{\bfx \in S_{s,G}} f_{s,q,G}(\bfx).
\end{equation}
We note that if $k_q(G) > 0$, then $M_{s,q,G} > 0$.
For every graph $G$, the set $\cS_{1,G}$ is the standard simplex, which is compact.
For $s \geq 2$, the set is $\cS_{s,G}$ is closed and unbounded.
The following proposition is the gives the crucial structural information about the optimisation problem.

\begin{proposition}
\label{prop:clique}
Given a graph $G$ and $1 \leq s \leq q \leq \omega(G)$, the function $f_{s,q,G}(\bfx)$ attains its maximum at a point $\bfx \in \cS_{s,G}$ with $\supp \bfx$ being a clique in $G$.
\end{proposition}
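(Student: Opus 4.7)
My plan is to decompose the argument into two parts: a \emph{support reduction} which turns any $\bfx \in \cS_{s,G}$ into a clique-supported $\bfy \in \cS_{s,G}$ with $f_{s,q,G}(\bfy) \geq f_{s,q,G}(\bfx)$; and an \emph{existence} argument showing that the supremum of $f_{s,q,G}$ over clique-supported points is attained. Together these yield a maximiser whose support is a clique.

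For the reduction I exploit the multilinearity of $h_{s,G}$ and $f_{s,q,G}$. If $\supp\bfx$ is not a clique, pick non-adjacent $u, v \in \supp\bfx$. Since $u,v$ lie in no common clique of $G$, fixing the remaining coordinates yields $h_{s,G} = \alpha\bfx_u + \beta\bfx_v + \gamma$ and $f_{s,q,G} = A\bfx_u + B\bfx_v + D$ with nonnegative coefficients and no cross term. The key observation is that $\alpha = 0$ forces $A = 0$: if $A > 0$ then some $q$-clique $I$ contains $u$ but not $v$ with $I\setminus\{u\}\subseteq\supp\bfx$, and adjoining $u$ to any $(s-1)$-subset of $I\setminus\{u\}$ (possible since $s\leq q$) yields an $s$-clique contributing to $\alpha$. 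So when $\alpha = 0$ (or symmetrically $\beta = 0$) we may zero out $\bfx_u$ without changing $h_{s,G}$ or $f_{s,q,G}$; when $\alpha,\beta > 0$, the constraint $h_{s,G} = 1$ restricts $(\bfx_u, \bfx_v)$ to a non-trivial segment on which $f_{s,q,G}$ is linear, so its maximum is attained at an endpoint where one of the two coordinates is zero. Each step strictly shrinks the support while staying in $\cS_{s,G}$, so iterating terminates at a clique-supported point.

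For existence, fix a clique $C$ of $G$ with $|C| \geq q$ and let $\cS'_C \defined \set{\bfy \in \RR^V \st \bfy \geq 0,\, \supp\bfy \subseteq C,\, h_{s,G}(\bfy) = 1}$. By homogeneity, maximising $f_{s,q,G}$ over $\cS'_C$ is equivalent to maximising the scale-invariant ratio $F(\bfy) \defined f_{s,q,G}(\bfy)/h_{s,G}(\bfy)^{q/s}$ over $\Delta_C \cap \set{h_{s,G} > 0}$, where $\Delta_C$ is the standard simplex on $C$. Take a maximising sequence $\bfy^{(n)}$ and, by compactness, pass to a convergent subsequence with limit $\bfy^* \in \Delta_C$. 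If $h_{s,G}(\bfy^*) > 0$ we are done by continuity. Otherwise $T \defined \supp\bfy^*$ satisfies $|T| \leq s-1$, since inside the clique $C$ a zero of $h_{s,G}$ has support of size less than $s$; a leading-order expansion in the mass $\lambda \defined \sum_{v\in C\setminus T}\bfy^{(n)}_v \to 0$ gives $h_{s,G} \sim c\lambda^{s-|T|}$ and $f_{s,q,G} \sim c'\lambda^{q-|T|}$, whence $F(\bfy^{(n)}) \sim \lambda^{|T|(q/s - 1)} \to 0$ when $q > s$ (the case $q = s$ is immediate, as then $f_{s,q,G} \equiv h_{s,G}$). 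This second case would force the supremum to be zero, contradicting the strict positivity of $F$ at, e.g., the uniform point on $C$ (where $|C| \geq q$ gives $f_{s,q,G} > 0$). Hence the maximiser lies in $\set{h_{s,G} > 0}$, and rescaling yields a maximiser of $f_{s,q,G}$ on $\cS'_C$.

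The most delicate step is the boundary-asymptotics continuity check, which reduces to the identity $(q-|T|) - (s-|T|)(q/s) = |T|(q/s - 1)$, positive for $q > s$ and $|T| \geq 1$. Once existence is secured on every clique $C$ with $|C| \geq q$, the reduction gives $\sup_{\cS_{s,G}} f_{s,q,G} = \max_C \sup_{\cS'_C} f_{s,q,G}$, a maximum over finitely many attained values, hence attained at a clique-supported point.
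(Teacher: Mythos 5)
Your architecture is genuinely different from the paper's, and the first half is a real simplification: the paper first proves attainment of the supremum on all of $\cS_{s,G}$ (its Lemma \ref{lem:attain}, a lengthy induction on the number of vertices) and only then symmetrises at a maximiser using Lagrange multipliers, whereas your multilinearity reduction works pointwise at an \emph{arbitrary} $\bfx\in\cS_{s,G}$ --- the absence of a cross term in $\bfx_u\bfx_v$ for non-adjacent $u,v$ makes $f_{s,q,G}$ affine on the segment cut out by $h_{s,G}=1$, so you can push to an endpoint without any optimality hypothesis. That step, including the observation that $\alpha=0$ forces $A=0$, is correct, and it has the pleasant side effect that the inequality $f_{s,q,G}\leq 1$ on $\cS_{s,G}$ follows without any attainment argument at all (reduce to a clique, then apply Maclaurin as in the paper's proof of Proposition \ref{prop:localised-maclaurin-inequality}). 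Since the proposition asserts that the maximum is \emph{attained}, however, you still need your existence step, and that is where there is a genuine gap.

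The gap is the claimed leading-order asymptotics $h_{s,G}(\bfy^{(n)})\sim c\lambda^{s-|T|}$ and $f_{s,q,G}(\bfy^{(n)})\sim c'\lambda^{q-|T|}$ with $c,c'>0$. These are false in general: the residual mass $\lambda$ on $C\setminus T$ may itself concentrate on very few vertices, so the relevant elementary symmetric functions of $\bfz\defined\bfy^{(n)}|_{C\setminus T}$ can be positive but of much smaller order than $\lambda^{s-|T|}$. For instance, with $s=3$, $T=\set{1}$ and $\bfz$ carrying mass $\lambda-\eps$ on one vertex and $\eps\ll\lambda$ on another, one gets $h\asymp\lambda\eps$, not $\asymp\lambda^{2}$. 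Consequently $F(\bfy^{(n)})\sim\lambda^{|T|(q/s-1)}$ does not follow from what you wrote. The conclusion you need (that $F\to 0$ along any sequence in $\Delta_C$ with $h\to 0$) is nevertheless true, but proving it requires more care: writing $h\asymp\sum_{j=0}^{|T|}e_{s-j}(\bfz)$ and $f\asymp\sum_{j=0}^{|T|}e_{q-j}(\bfz)$ and applying the Maclaurin chain to $\bfz$ term by term, one finds $e_{q-j}(\bfz)\leq C\,e_{s-j'}(\bfz)^{(q-j)/(s-j')}$ with exponent strictly larger than $q/s$ for a suitable $j'\geq 1$ in range (using $|T|\geq 1$), and the excess exponent supplies the required $o(1)$; the term $j=j'=0$ alone gives only boundedness, which is exactly why the naive argument fails. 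This attainment issue is precisely the historically delicate point --- it is the gap in Khadzhiivanov's original proof that Nikiforov repaired, and the reason the paper spends all of Section \ref{sec:attain} on it --- so it cannot be waved through with an unjustified expansion. Repair that step (or replace it with an induction as in the paper) and your proof is complete.
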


We will now see that \Cref{prop:clique} quickly gives us \Cref{prop:localised-maclaurin-inequality}.

\begin{proof}[Proof of \Cref{prop:localised-maclaurin-inequality}]
By \Cref{prop:clique}, there is $\bfx^\ast \in \cS_{s,G}$ such that $f_{s,q,G}(\bfx^\ast) = M_{s,q,G}$ and $\supp \bfx^\ast$ is a clique, let's say, a $K_R$.
Recall that $\rho_{s,q}$ is decreasing, so
\begin{align*}
    f_{s,q,G}(\bfx^\ast)
    &\leq \sum_{I \in K_q(K_R)}\rho_{s,q}(\sigma_G(I)) \bfx^\ast_I
    \leq \rho_{s,q}(R) \sum_{I \in K_q(K_R)}\bfx^\ast_I \\
    &= \rho_{s,q}(R) h_{q,K_R}(\bfx^\ast).
\end{align*}
By Maclaurin's inequality \eqref{eq:maclaurin}, we have
\begin{equation*}
    h_{q,K_R}(\bfx^\ast) \leq \binom{R}{q} \p[\bigg]{\frac{h_{s,K_R}(\bfx^\ast)}{\tbinom{R}{s}} }^{q/s} = 1 / \rho_{s,q}(R).
\end{equation*}
Therefore, $f_{s,q,G}(\bfx^\ast) \leq 1$ as we wanted.
\end{proof}

With \Cref{prop:localised-maclaurin-inequality}, we can easily get the inequality \eqref{eq:localised-khadzhiivanov} in \Cref{thm:localised-khadzhiivanov} by setting $\bfx = 1$.
What is less clear to see is that \Cref{thm:localised-khadzhiivanov} if actually equivalent to \Cref{thm:localised-maclaurin}.
The idea of using combinatorial means to prove analytical inequalities can be traced back to Sidorenko~\cite{Sidorenko1987-rp}.

\begin{proposition}
\label{prop:sidorenko}
Inequality \eqref{eq:localised-khadzhiivanov} implies inequality \eqref{eq:localised-maclaurin}.
\end{proposition}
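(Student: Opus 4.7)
The plan is a standard blow-up argument going back to Sidorenko. I shall establish \eqref{eq:localised-maclaurin} first for integer vectors $\bfx \in \mathbb{Z}_{>0}^V$ by applying the combinatorial inequality \eqref{eq:localised-khadzhiivanov} to a carefully chosen auxiliary graph, and then extend to $\bfx \geq 0$ by homogeneity and continuity.

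For $\bfx \in \mathbb{Z}_{>0}^V$, form the blow-up $G'$ of $G$: replace each vertex $v$ by an independent set $V_v$ of size $\bfx_v$, and join $V_u$ completely to $V_v$ whenever $uv \in E(G)$. Since each $V_v$ is independent in $G'$, every clique of $G'$ meets each class in at most one vertex, so the projection $\pi \from K_j(G') \to K_j(G)$ sending a clique of $G'$ to the set of classes it touches is well-defined, and its fibre over $J \in K_j(G)$ has size $\bfx_J$. Two consequences follow immediately: $k_s(G') = h_{s,G}(\bfx)$, and, grouping the sum by projection, the left-hand side of \eqref{eq:localised-khadzhiivanov} applied to $G'$ equals $\sum_{I \in K_q(G)} \bfx_I \, \rho_{s,q}(\sigma_{G'}(I'))$ for any choice of representative $I' \in \pi^{-1}(I)$. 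The bridge between the two worlds is the identity $\sigma_{G'}(I') = \sigma_G(I)$, which holds because any clique of $G'$ extending $I'$ projects to a clique of $G$ extending $I$, while conversely every extension of $I$ in $G$ lifts (each class $V_v$ is non-empty) to an extension of $I'$ in $G'$ of the same size. Applying \eqref{eq:localised-khadzhiivanov} to $G'$ therefore reads exactly as $f_{s,q,G}(\bfx) \leq h_{s,G}(\bfx)^{q/s}$.

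To close, note that both sides of \eqref{eq:localised-maclaurin} are homogeneous polynomials of degree $q$ in $\bfx$, so scaling out a common denominator extends the inequality from $\bfx \in \mathbb{Z}_{>0}^V$ to $\bfx \in \mathbb{Q}_{>0}^V$, and continuity extends it further to all $\bfx \in \RR^V$ with $\bfx \geq 0$.

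I do not foresee a genuine obstacle: the argument is mostly bookkeeping about fibres of the projection $\pi$. The one point meriting some care is the clique-preservation identity $\sigma_{G'}(I') = \sigma_G(\pi(I'))$, as this is the single place where the local statistic $\sigma$ interacts non-trivially with the blow-up; but it is immediate from the fact that cliques in $G'$ correspond bijectively to cliques in $G$ equipped with one chosen representative per touched class, a correspondence that preserves sizes.
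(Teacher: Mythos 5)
Your argument is correct and is essentially the paper's own proof: the same blow-up construction, the same fibre-counting identities $k_s(G_\bfx)=h_{s,G}(\bfx)$ and $f_{s,q,G_\bfx}(1)=f_{s,q,G}(\bfx)$, and the same reduction to rational/integer vectors by homogeneity and continuity. You spell out the preservation of the local statistic, $\sigma_{G'}(I')=\sigma_G(\pi(I'))$, in more detail than the paper does, which is a welcome clarification rather than a departure.
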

\begin{proof}
The inequality \eqref{eq:localised-khadzhiivanov} in \Cref{thm:localised-khadzhiivanov} says that $f_{s,q,G}(\bfx) \leq h_{s,G}(\bfx)$ for $\bfx = 1$.
To get the inequality for all $\bfx \geq 0$ note that since both $f_{s,q,G}$ and $h_{s,G}$ are continuous, it is enough to prove it for $\bfx$ with all coordinate rationals.
If some coordinate is $0$, we can remove the associated vertex.
Moreover, $f_{s,q,G}$ and $h_{s,G}^{q/s}$ are both homogeneous of the same degree, so we can rescale the coordinates of $\bfx$ to be all integers.

To go from integer coordinates to all 1's coordinates, we can consider the blowup of $G$.
Denote by $G_\bfx$ the graph obtained from $G$ by replacing each vertex $v$ with an independent set $U_v$ of size $\bfx_v$, and for every edge $uv \in E(G)$, replace the edge $uv$ with a complete bipartite graph with vertex classes $U_u$ and $U_v$.
Now observe that for every $J \in K_s(G)$ leads to the creation of $\bfx_J$ cliques in $G_\bfx$, thus
\begin{equation*}
    k_s(G_\bfx) = \sum_{J \in K_s(G)} \bfx_J = h_{s,G}(\bfx). 
\end{equation*}
Similarly, every $I \in K_q(G)$ leads to the creation of $\bfx_I$ cliques in $G_\bfx$ with the same value of $\sigma$, therefore
\begin{equation*}
    f_{s,q,G_\bfx}(1) = \sum_{I \in K_q(G)} \rho_{s,q}(\sigma_G(I)) \bfx_I = f_{s,q,G}(\bfx).
\end{equation*}
Finally, this gives
\begin{equation*}
    f_{s,q,G}(\bfx) = f_{s,q,G_\bfx}(1) \leq k_{s}(G_\bfx)^{q/s} = h_{s,G}(\bfx)^{q/s},
\end{equation*}
as claimed.
\end{proof}

The proof of \Cref{prop:clique} is divided in two steps.
The first step is a quite technical one, we must show that the supremum \eqref{eq:constant} is actually attained.
That is, we must show that there is $\bfx^\ast \in \cS_{s,G}$ such that $f_{s,q,G}(\bfx) \leq f_{s,q,G}(\bfx^\ast)$ for all $\bfx \in \cS_{s,G}$.
As pointed out before, the set $\cS_{1,G}$ is compact and the existence of $\bfx^\ast$ is trivial.
For $s \geq 2$, however, $\cS_{s,G}$ is closed and unbounded, so we must deal with this issue.
This is the content of \Cref{lem:attain}, the proof which we postpone to \Cref{sec:attain} as it is quite lengthy and not the highlight of the proof.

\begin{lemma}
\label{lem:attain}
Given a graph $G$ and $1 \leq s \leq q \leq \omega(G)$, the function $f_{s,q,G}(\bfx)$ attains a maximum with $\bfx \in \cS_{s,G}$.
\end{lemma}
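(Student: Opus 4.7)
Since $\rho_{s,s}(t) = 1$ for all $t$, the case $q = s$ is trivial: $f_{s,s,G} = h_{s,G}$ takes the constant value $1$ on $\cS_{s,G}$. Assume henceforth that $q > s$. My plan is to first check that $f_{s,q,G}$ is uniformly bounded on $\cS_{s,G}$, then to argue that along a maximising sequence, coordinates escaping to infinity contribute negligibly to $f_{s,q,G}$, so the limit on the remaining coordinates still witnesses the supremum after a harmless rescaling.

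For boundedness, fix $\bfx \in \cS_{s,G}$ and $I \in K_q(G)$. For each $s$-subset $J \subseteq I$, non-negativity gives $\bfx_J \leq h_{s,G}(\bfx) = 1$; multiplying over all such $J$ and using that each $v \in I$ lies in $\binom{q-1}{s-1}$ of them yields $\bfx_I^{\binom{q-1}{s-1}} \leq 1$, so $\bfx_I \leq 1$. Therefore $f_{s,q,G}(\bfx) \leq \sum_{I \in K_q(G)} \rho_{s,q}(\sigma(I))$ and $M_{s,q,G}$ is finite; taking $\bfx$ supported on any single $q$-clique shows $M_{s,q,G} > 0$. Pick a maximising sequence $\bfx^{(n)} \in \cS_{s,G}$. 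If $(\bfx^{(n)})$ is bounded in $\RR^V$, compactness together with continuity of $f_{s,q,G}$ and $h_{s,G}$ yields a limit point in $\cS_{s,G}$ attaining $M_{s,q,G}$.

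Otherwise, after passing to a subsequence, partition $V = V_b \cup V_\infty$ so that $\bfx^{(n)}_v$ stays bounded on $V_b$ and tends to infinity on $V_\infty$, and further assume $\bfx^{(n)}_v \to \bfx^\ast_v$ for all $v \in V_b$. The constraint $h_{s,G}(\bfx^{(n)}) = 1$ forces $V_\infty$ to be $K_s$-free, so every $q$-clique $I$ meeting $V_\infty$ satisfies $|A| \in \{1,\dotsc,s-1\}$, where $A \defined I \cap V_\infty$ and $B \defined I \cap V_b$. The key claim is that $\bfx^{(n)}_I \to 0$ for every such $I$. For each $(s-|A|)$-subset $J' \subseteq B$, the set $A \cup J'$ is an $s$-clique, hence $\bfx^{(n)}_A \bfx^{(n)}_{J'} \leq 1$; taking the product over all such $J'$ and double-counting vertex appearances in $B$ gives $(\bfx^{(n)}_B)^{\binom{|B|-1}{s-|A|-1}} \leq (\bfx^{(n)}_A)^{-\binom{|B|}{s-|A|}}$. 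Since the ratio of these binomials equals $|B|/(s-|A|)$, this rearranges to $\bfx^{(n)}_I = \bfx^{(n)}_A \bfx^{(n)}_B \leq (\bfx^{(n)}_A)^{(s-q)/(s-|A|)}$, with strictly negative exponent because $s < q$ and $|A| < s$; as $\bfx^{(n)}_A \to \infty$, the claim follows.

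Let $\tilde\bfx^\ast \in \RR^V$ agree with $\bfx^\ast$ on $V_b$ and vanish on $V_\infty$. The key claim gives $f_{s,q,G}(\bfx^{(n)}) \to f_{s,q,G}(\tilde\bfx^\ast) = M_{s,q,G}$, while keeping only the purely-$V_b$ terms in $h_{s,G}(\bfx^{(n)}) = 1$ yields $h_{s,G}(\tilde\bfx^\ast) \leq 1$. Since $M_{s,q,G} > 0$, we must have $h_{s,G}(\tilde\bfx^\ast) > 0$; were $h_{s,G}(\tilde\bfx^\ast) < 1$, rescaling $\tilde\bfx^\ast$ by $h_{s,G}(\tilde\bfx^\ast)^{-1/s} > 1$ would produce a point of $\cS_{s,G}$ with $f_{s,q,G}$-value $M_{s,q,G}/h_{s,G}(\tilde\bfx^\ast)^{q/s} > M_{s,q,G}$, contradicting the definition of $M_{s,q,G}$. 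Hence $h_{s,G}(\tilde\bfx^\ast) = 1$ and $\tilde\bfx^\ast$ attains the maximum. The main obstacle is the key claim above: both $q > s$ and the $K_s$-freeness of $V_\infty$ (forcing $|A| < s$) are needed to extract the strictly negative exponent from the AM-GM-style bookkeeping.
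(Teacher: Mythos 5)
Your argument is correct, and it takes a genuinely different route from the paper. The paper proves the lemma by induction on $\card{V(G)}$: when some coordinate of a maximising sequence blows up, it locates a companion vertex $u$ in a common $s$-clique whose coordinate must tend to $0$, bounds the contribution of all $q$-cliques through $u$ by $O\p{\bfx^{(i)}_u}$, and then passes to $G-u$ via the inductive hypothesis after a rescaling. You instead avoid induction entirely: you split $V$ into bounded and unbounded coordinates along a subsequence, note that the unbounded part is $K_s$-free, and prove the quantitative decay estimate $\bfx^{(n)}_I \leq (\bfx^{(n)}_A)^{(s-q)/(s-\card{A})} \to 0$ for every $q$-clique $I$ meeting the unbounded part (your double-counting of the $(s-\card{A})$-subsets of $B$ checks out, including the exponent ratio $\card{B}/(s-\card{A})$ and the strict negativity of $(s-q)/(s-\card{A})$), after which the same rescaling trick as in the paper upgrades $h_{s,G}(\tilde\bfx^\ast) \leq 1$ to equality. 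Both proofs rest on the same AM--GM normalisation $\bfx_J \leq 1$ for $J \in K_s(G)$, but yours is a single direct compactness-plus-decay argument and is arguably cleaner and more self-contained, whereas the paper's follows Nikiforov's inductive treatment; the only points worth making fully explicit in your write-up are the routine diagonal subsequence extraction defining $V_b$ and $V_\infty$, and the (vacuously fine) case $s=1$ where $V_\infty$ must be empty.
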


The last ingredient of the proof of \Cref{prop:clique} is a symmetrisation argument.

\begin{lemma}
\label{lem:zykov}
Let $G$ be a graph and $1 \leq s \leq q \leq \omega(G)$.
Suppose that every vertex of $G$ is in an $s$-clique and that $f_{s,q,G}$ attains a maximum, restricted to $\cS_{s,G}$, at some point $\bfx > 0$.
If $u$ and $v$ are not adjacent, then there is $\bfy \in \cS_{s,G}$ such that $f_{s,q,G}(\bfy) = f_{s,q,G}(\bfx)$ and $\bfy_u = 0$.
\end{lemma}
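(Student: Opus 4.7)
The plan is to execute a Zykov-style symmetrisation, exploiting the crucial fact that since $u$ and $v$ are non-adjacent, no clique of $G$ contains both of them. First I would write $h_{s,G}$ and $f_{s,q,G}$ as affine functions of $(\bfx_u,\bfx_v)$ with the remaining coordinates held fixed. Concretely, decomposing each sum according to whether a clique contains $u$, $v$, or neither, we obtain
\begin{align*}
    h_{s,G}(\bfx) &= A_0 + A_u\,\bfx_u + A_v\,\bfx_v,\\
    f_{s,q,G}(\bfx) &= B_0 + B_u\,\bfx_u + B_v\,\bfx_v,
\end{align*}
where the coefficients depend only on $(\bfx_w)_{w\neq u,v}$, and the non-adjacency of $u,v$ forbids any $\bfx_u\bfx_v$ cross term. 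The hypothesis that every vertex lies in an $s$-clique, combined with $\bfx>0$, forces $A_u>0$ and $A_v>0$.

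Next I would introduce the one-parameter deformation
\begin{equation*}
    \bfy_w(t) = \bfx_w \text{ for } w\neq u,v, \qquad \bfy_u(t)=\bfx_u+t, \qquad \bfy_v(t)=\bfx_v-\tfrac{A_u}{A_v}\,t.
\end{equation*}
By construction $h_{s,G}(\bfy(t))=h_{s,G}(\bfx)=1$ for all $t$, so $\bfy(t)\in\cS_{s,G}$ whenever $\bfy(t)\geq 0$, which happens precisely for $t\in\bigl[-\bfx_u,\,\tfrac{A_v}{A_u}\bfx_v\bigr]$. Since $\bfx_u,\bfx_v>0$, the point $\bfx=\bfy(0)$ is an interior point of this interval.

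Finally, the composition $t\mapsto f_{s,q,G}(\bfy(t))$ is affine in $t$. Because $\bfx$ maximises $f_{s,q,G}$ on $\cS_{s,G}$ and lies in the interior of the feasible segment, this affine function must be constant; otherwise one of the two endpoints would strictly exceed $f_{s,q,G}(\bfx)$, contradicting maximality. Taking $\bfy \defined \bfy(-\bfx_u)$ yields a vector with $\bfy_u=0$, $\bfy\in\cS_{s,G}$, and $f_{s,q,G}(\bfy)=f_{s,q,G}(\bfx)$, as required. There is no real obstacle here beyond the affineness setup; the only place the hypotheses truly bite is in ensuring $A_u,A_v>0$ so that the segment is non-degenerate and the parametrisation makes sense.
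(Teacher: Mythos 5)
Your proof is correct and follows essentially the same symmetrisation as the paper: both exploit that non-adjacency of $u$ and $v$ makes $h_{s,G}$ and $f_{s,q,G}$ affine in $(\bfx_u,\bfx_v)$ (no cross term), and both shift mass from $u$ to $v$ along the level set $h_{s,G}=1$, using that every vertex lies in an $s$-clique and $\bfx>0$ to get the positive coefficients. The only difference is the final step: the paper invokes Lagrange multipliers to conclude $f_{s,q,G}$ is unchanged, while you observe directly that an affine function maximised at an interior point of a feasible segment must be constant --- a slightly more self-contained route to the same conclusion.
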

\begin{proof}
Define $\bfy$ as
\begin{align*}
    \bfy_z \defined
    \begin{cases}
        \bfx_z & \text{if $z \neq u,v$}, \\
        \bfx_u + \xi_u & \text{if $z = u$}, \\
        \bfx_v + \xi_v & \text{if $z = v$},
    \end{cases}
\end{align*}
where $\xi_u$ and $\xi_v$ will be chosen later.
Observe that for any $w \in V$, we have
\begin{equation*}
    \frac{\partial h_{s,G}(\bfx)}{\partial \bfx_w} = \sum\limits_{\substack{J \in K_s(G) \\ w \in J}} \bfx_{J \setminus \set{w}}.
\end{equation*}
Thus, as $u$ and $v$ are not neighbours, we obtain
\begin{align}
\label{eq:h-derivative}
h_{s,G}(\bfy) = \xi_u \frac{\partial h_{s,G}(\bfx)}{\partial \bfx_u} + \xi_v \frac{\partial h_{s,G}(\bfx)}{\partial \bfx_v} + h_{s,G}(\bfx),
\end{align}
and similarly,
\begin{align}
\label{eq:f-derivative}
f_{s,q,G}(\bfy) = \xi_u \frac{\partial f_{s,q,G}(\bfx)}{\partial \bfx_u} + \xi_v \frac{\partial f_{s,q,G}(\bfx)}{\partial \bfx_v} + f_{s,q,G}(\bfx).
\end{align}

Note that $\partial h_{s,G}(\bfx) / \partial \bfx_w > 0$ for all $w \in V$ as $\bfx > 0$ and all vertices are in some $s$-clique.
We set
\begin{equation*}
    \xi_u = - \bfx_u, \qquad \xi_v = \bfx_u \frac{\partial h_{s,G}(\bfx) / \partial \bfx_u}{\partial h_{s,G}(\bfx) / \partial \bfx_v},
\end{equation*}
so $h_{s,G}(\bfy) = h_{s,G}(\bfx) = 1$ by \eqref{eq:h-derivative}.
In particular, $\bfy \in \cS_{s,G}$. 

By the Lagrange's method, as $f_{s,q,G}$ is maximised at $\bfx$ subject to $h_{s,G}(\bfx) = 1$, there is $\lambda \in \RR$ such that,
\begin{equation*}
    \frac{\partial f_{s,q,G}(\bfx)}{\partial \bfx_w} = \lambda \frac{\partial h_{s,G}(\bfx)}{\partial \bfx_w},
\end{equation*}
for all $w \in V$.
Together with \eqref{eq:f-derivative}, this implies that
\begin{equation*}
    f_{s,q,G}(\bfy) = \lambda\p[\Big]{\xi_u \frac{\partial h_{s,G}(\bfx)}{\partial \bfx_u} + \xi_v \frac{\partial h_{s,G}(\bfx)}{\partial \bfx_v}} + f_{s,q,G}(\bfx) = f_{s,q,G}(\bfx).
\end{equation*}
We are done as $\bfy_u = 0$.
\end{proof}

We are now ready to prove \Cref{prop:clique}, that is, to show that there is a maximum point $\bfx \in \cS_{s,G}$ whose support is a clique.

\begin{proof}[Proof of \Cref{prop:clique}]
Among the points $\bfx \in \cS_{s,G}$ with $f_{s,q,G}(\bfx) = M_{s,q,G}$, choose one with $\card{\supp \bfx}$ being minimal.
Such $\bfx$ exists from \Cref{lem:attain}.
Moreover, $\bfx$ is also a maxima restricted to the support $R = \supp \bfx$, that is, $f_{s,q,G[R]}(\bfx) = M_{s,q,G[R]}$.
Suppose that $G[R]$ is not a clique and let $u, v \in R$ be distinct vertices such that $uv \notin E(G)$.
Applying \Cref{lem:zykov} to $G[R]$, there is $\bfy \in \cS_{s,G[R]}$ such that $f_{s,q,G[R]}(\bfy) = M_{s,q,G[R]}$ and moreover, $\bfy_u = 0$.
This contradicts the minimality of $\card{\supp \bfx}$.
\end{proof}

As previously established, the inequalities in \Cref{thm:localised-khadzhiivanov} and \Cref{thm:localised-maclaurin} follow from \Cref{prop:clique}.


\section{When equality holds}
\label{sec:equality}

Having established the inequalities \eqref{eq:localised-khadzhiivanov} in \Cref{thm:localised-khadzhiivanov} and \eqref{eq:localised-maclaurin} in \Cref{thm:localised-maclaurin}, we now determine when equality holds.
To do so, we must apply the symmetrisation argument of \Cref{lem:zykov} is a more careful way.
Moreover, we must use the fact that equality holds in the original Maclaurin's inequality \eqref{eq:maclaurin} for cliques only when all the coordinates are equal.

\begin{proposition}
\label{prop:localised-maclaurin-equality}
Let $G$ be a graph with clique number $\omega$ and $1 \leq s < q \leq \omega$. Let $U_s \subseteq V(G)$ be the set of vertices that are contained in an $s$-clique in $G$.
The equality
\begin{equation}
\label{eq:equality}
    f_{s,q,G}(\bfx) = h_{s,G}(\bfx)^{q/s},
\end{equation}
holds for $\bfx > 0$ if and only if the graph $G$ induced on $U_s$ is a complete $\omega$-partite graph with parts $V_1, \dotsc, V_\omega$ and $\sum_{v \in V_i} \bfx_v = \sum_{u \in V_j} \bfx_u$ for all $1 \leq i,j \leq \omega$.
\end{proposition}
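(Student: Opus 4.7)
For the \emph{if} direction, suppose $G[U_s]$ is complete $\omega$-partite with parts $V_1,\dotsc,V_\omega$ and all part-sums $y_i \defined \sum_{v \in V_i} \bfx_v$ equal to a common value $c$. Every $s$- or $q$-clique of $G[U_s]$ selects one vertex from each of $s$ or $q$ distinct parts, and any $q$-clique extends to a transversal of all $\omega$ parts, so $\sigma(I) = \omega$ for every $I \in K_q(G)$. Substituting gives $h_{s,G}(\bfx) = \binom{\omega}{s} c^s$ and $f_{s,q,G}(\bfx) = \rho_{s,q}(\omega) \binom{\omega}{q} c^q = \binom{\omega}{s}^{q/s} c^q$, yielding equality.

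For the \emph{only if} direction, I assume $\bfx > 0$ and $f_{s,q,G}(\bfx) = h_{s,G}(\bfx)^{q/s}$. Since vertices outside $U_s$ appear in neither polynomial, I replace $G$ with $G[U_s]$ and assume $V(G) = U_s$; after normalising $h_{s,G}(\bfx) = 1$, the vector $\bfx$ is a maximiser in the sense of \Cref{prop:localised-maclaurin-inequality}. I argue by induction on $\card{V(G)}$. If $G$ is a clique, the statement reduces to the classical Maclaurin equality case, which forces $\bfx$ constant and gives $\omega$ singleton parts. Otherwise, pick a non-adjacent pair $(u,v)$ and apply \Cref{lem:zykov} to obtain $\bfy \in \cS_{s,G}$ with $\bfy_u = 0$, $\bfy|_{V \setminus \{u\}} > 0$, and $f_{s,q,G}(\bfy) = 1$. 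Because $\sigma_G(I) \geq \sigma_{G \setminus u}(I)$ and $\rho_{s,q}$ is strictly decreasing, the sandwich
\[
    1 = f_{s,q,G}(\bfy) \leq f_{s,q,G \setminus u}(\bfy|_{V \setminus \{u\}}) \leq h_{s,G \setminus u}(\bfy|_{V \setminus \{u\}})^{q/s} = 1,
\]
whose rightmost inequality is \Cref{prop:localised-maclaurin-inequality} applied to $G \setminus u$, collapses to equalities. This forces $\bfy$ to attain equality on $G' \defined (G \setminus u)[U_s^{G \setminus u}]$ \emph{and} $\sigma_G(I) = \sigma_{G \setminus u}(I)$ for every $I \in K_q(G')$. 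The inductive hypothesis on $G'$ then produces parts $V'_1,\dotsc,V'_{\omega'}$ of $G'$ on which $\bfy$ is balanced.

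To close the induction I determine where $u$ lies in the multipartite structure. Running the exchange with any other non-neighbour of $u$ in $V(G')$ and comparing the two balanced configurations part-by-part forces all non-neighbours of $u$ in $V(G')$ to share a common part $V'_{i_0}$. I then show $V'_{i_0} \cap N(u) = \emptyset$: if some $w \in V'_{i_0} \cap N(u)$ existed, then a $q$-clique $I$ of $G'$ either avoiding $V'_{i_0}$ (valid when $q \leq \omega' - 1$) or containing $w$ (valid when $q = \omega'$) would extend through $u$ and $w$ to an $(\omega'+1)$-clique of $G$, contradicting $\sigma_G(I) = \sigma_{G \setminus u}(I) = \omega'$. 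Consequently, $u$ and every vertex of $V'_{i_0}$ share the same neighbourhood in $V(G')$, so $G$ is complete $\omega'$-partite with parts obtained from $V'_1,\dotsc,V'_{\omega'}$ by absorbing $u$ into $V'_{i_0}$. Substituting the resulting part-sums $y_i$ into the formulas from the \emph{if} direction converts the equality $f_{s,q,G}(\bfx) = h_{s,G}(\bfx)^{q/s}$ into the classical Maclaurin equality in the variables $y_1,\dotsc,y_{\omega'}$, which forces $y_1 = \dotsb = y_{\omega'}$. The main obstacle is this second structural step: extracting $\sigma_G(I) = \sigma_{G \setminus u}(I)$ from the collapsed chain and then deploying it, with a carefully chosen $q$-clique, to rule out partial adjacency between $u$ and $V'_{i_0}$.
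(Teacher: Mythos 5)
Your \emph{if} direction is fine, and the overall skeleton of the \emph{only if} direction (symmetrise via \Cref{lem:zykov}, sandwich between $f_{s,q,G}$ and the bound for a smaller graph, force $\sigma_G(I)=\sigma_{G\setminus u}(I)$, finish with the classical Maclaurin equality case on part-sums) is in the right spirit. But there is a genuine gap at the step you yourself flag as the main obstacle. You claim that ``running the exchange with any other non-neighbour of $u$ and comparing the two balanced configurations part-by-part'' forces all non-neighbours of $u$ in $V(G')$ into a single part $V'_{i_0}$. This comparison yields nothing: the partition $V'_1,\dotsc,V'_{\omega'}$ of $G'$ is intrinsic to $G'$ (it is the partition into complement components), so it is the \emph{same} partition regardless of which $v$ you used to zero out $u$, and it carries no information about the adjacencies of $u$. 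What the collapsed chain actually gives you is $\sigma_G(I)=\sigma_{G\setminus u}(I)$ for all $q$-cliques of $G'$; from this one can deduce that \emph{some} part contains no neighbour of $u$ (otherwise a transversal through neighbours of $u$ creates an $(\omega'+1)$-clique), but not that \emph{all} non-neighbours of $u$ lie in that part. The problematic configuration --- $u$ non-adjacent to one vertex $v_1$ in a singleton part $V'_1$ and also to some $v_2\in V'_2$, adjacent to everything else --- is not excluded by any argument you give, yet it makes $G$ fail to be complete multipartite, and your subsequent steps (showing $V'_{i_0}\cap N(u)=\emptyset$ and absorbing $u$ into $V'_{i_0}$) silently assume it cannot occur. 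Two secondary issues in the same induction: you apply the hypothesis to $(G\setminus u)[U_s^{G\setminus u}]$, but vertices that lie in an $s$-clique of $G$ only through $u$ drop out of this graph and are left unconstrained; and you never verify $\omega'=\omega(G\setminus u)$ relates correctly to $\omega(G)$ before invoking transversal cliques of $G'$.

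For comparison, the paper avoids the induction entirely and works globally: it takes $V_1,\dotsc,V_\ell$ to be the complement components of $G$ itself, picks a ``canonical'' transversal $K_\ell$, and uses \Cref{lem:zykov} repeatedly \emph{within each component} to concentrate all mass on that transversal. The resulting equality chain forces $\sigma_G(I)=\ell$ for every canonical $K_q$, and independence of each $V_i$ then follows in one line (an edge inside $V_i$ would extend a canonical $K_\ell$ to a $K_{\ell+1}$). Because the parts are defined from $G$ rather than from $G\setminus u$, the question of where a deleted vertex sits never arises. If you want to salvage your inductive route, you would need an honest argument that $u$'s non-neighbourhood in $G'$ is exactly one part, which appears to require essentially the canonical-clique analysis anyway.
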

\begin{proof}
Let $\bfx > 0$ be such \eqref{eq:equality} holds.
We may assume that $G = G[U_s]$, the values of $\bfx_v$ for $v \notin U_s$ do not interfere with the values of neither $f_{s,q,G}(\bfx)$ nor $h_{s,G}(\bfx)$.
As $\bfx > 0$, we also have $h_{s,G}(\bfx) > 0$, so by homogeneity of both sides of \eqref{eq:equality}, we may assume that $h_{s,G}(\bfx) = 1$.
By \Cref{thm:localised-maclaurin}, we then have $f_{s,q,G}(\bfx) = M_{s,q,G} = 1$.

Let $V_1, \dotsc, V_\ell$ be the vertices of the connected components of the complement of $G$.
We say that a clique $I \subseteq V(G)$ is \indef{canonical} if $\card{I \cap V_j} \leq 1$ for all $1 \leq j \leq \ell$.
Our goal is to show that $\sigma(I) = \ell$ for every canonical clique in $G$.

Let $U$ be a canonical $K_\ell$ in $G$.
Repeatedly applying \Cref{lem:zykov} over the non-edges in $V_i$, we find $\bfy \in \cS_{s,G}$ with $f_{s,q,G}(y) = f_{s,q,G}(x) = M_{s,q,G}$ and $\supp y = U$.
Indeed, we may do so as each $V_i$ is connected in the complement.
We obtain
\begin{align}
\label{eq:sigma}
    1 = f_{s,q,G}(\bfy) = \sum_{I \in K_q(U)} \rho_{s,q}(\sigma_G(I)) \bfy_I
    \leq \rho_{s,q}(\ell) h_{q,G[U]}(\bfy).
\end{align}
By Maclaurin's inequality \eqref{eq:maclaurin}, we have
\begin{align*}
    h_{q,G[U]}(\bfy) \leq \p[\big]{h_{s,G[U]}(\bfy)}^{q/s} \binom{\ell}{q} \binom{\ell}{s}^{-q/s}  = \rho_{s,q}(\ell)^{-1}.
\end{align*}
Therefore, we have equality in \eqref{eq:sigma}, which means that $\sigma_G(I) = \ell$ for all canonical copies of $K_q$ in $G[U]$.
Since a canonical $K_q$ in $G$ can be extended to a canonical $K_\ell$, we have that $\sigma_G(I) = \ell$ for every canonical $K_q$ in $G$.

We now claim that each $V_i$ is an independent set.
Indeed, suppose that there is an edge $uv \in G[V_i]$.
There must be a canonical clique $U$ in $G$ of size $\ell$ with $u \in U$.
Then $U \cup \set{v}$ must span a $K_{\ell+1}$ clique in $G$, which contradicts the fact that $\sigma_G(I) = \ell$ for every $K_q$ in $U$.
Therefore, $G$ is a complete multipartite graph, and thus, $\ell = \omega$.

Consider now the reduced graph $R$, which is a clique with vertex set $\set{1, \dotsc, \omega}$.
Let $\bfz \in \RR^R$ be defined as $\bfz_i = \sum_{v \in V_i} \bfx_v$.
For $W \subseteq V(R)$ denote by $V_{W} \defined \bigcup_{i \in W} V_i$.
Thus, if $W = \set{w_1, \dotsc, w_t}$, we have
\begin{equation*}
    \bfz_W = \prod_{w \in W} \p[\Big]{\sum_{v \in V_w}\bfx_v} = \sum\limits_{\substack{v_i \in V_{w_i} \\ i = 1, \dotsc, t}} \prod_{i = 1}^{t} \bfx_{v_i} = h_{t,V_W}(\bfx).
\end{equation*}
Since $\sigma(I) = \omega$ for every clique in $G$, we have
\begin{align*}
    h_{s,G}(\bfx) = \sum_{J \in K_s(G)} \bfx_J = \sum_{W \in K_s(R)} \sum_{J \in K_s(V_W)} \bfx_J  = \sum_{W \in K_s(R)} h_{s,V_W}(\bfx) = h_{s,R}(\bfz).
\end{align*}
In particular $h_{s,R}(\bfz) = 1$.
Similarly, we have
\begin{align*}
    1 = f_{s,q,G}(\bfx) = \rho_{s,q}(\omega) h_{q,G}(\bfx) = \rho_{s,q}(\omega) h_{q,R}(\bfz).
\end{align*}
By Maclaurin's inequality \eqref{eq:maclaurin},
\begin{align*}
    1 = \rho_{s,q}(\omega)h_{q,R}(\bfz) \leq \rho_{s,q}(\omega) \p[\big]{h_{s,R}(\bfz)}^{q/s} \binom{\omega}{q} \binom{\omega}{s}^{-q/s} = 1.
\end{align*}
As equality holds for Maclaurin inequality only when $\bfz_i$ are all equal, we are done.
The converse follows in the same way.
\end{proof}

Now, a proof of \Cref{lem:attain} is the only step missing for a complete proof of \Cref{thm:localised-khadzhiivanov} and \Cref{thm:localised-maclaurin}.


\section{Attaining the maxima}
\label{sec:attain}

In this section, we give a proof of \Cref{lem:attain}.
Nikiforov~\cite{Nikiforov2006-us} noticed that Khadzhiivanov's~\cite{Khadzhiivanov1977-xg} proof of \eqref{eq:maclaurin} was incomplete as it assumed without proof that the supremum \eqref{eq:constant} was actually attained, which is not a triviality when $s \geq 2$.
We deal with this issue in essentially the same way that Nikiforov did.
Unfortunately, our proof is lengthy, for which we apologise.

\begin{proof}[Proof of \Cref{lem:attain}]
As $\cS_{1,G}$ is compact and $f_{1,q,G}$ attains a maximum in $\cS_{1,G}$, so assume $s \geq 2$.
If $s = q$, then $f_{s,q,G} = h_{s,G}$, so $f_{s,q,G}$ attains the maximum at any point $\bfx \in \cS_{s,G}$.
Assume $s < q$.

First note that $f_{s,q,G}$ is bounded on $\cS_{s,G}$.
Indeed, let $\bfx \in \cS_{s,G}$ and we give an uniform bound on $f_{s,q,G}(\bfx)$.
First observe that for $J \in K_s(G)$, we have $\bfx_J \leq h_{s,G}(\bfx) = 1$.
For any $t \geq s$, if $I \in K_t(G)$, the AM-GM inequality gives
\begin{align}
\label{eq:bound-cliques}
    \bfx_{I}
    = \p[\bigg]{\prod_{J \in K_s(I)} \bfx_J}^{1/\binom{t-1}{s-1}}
    \leq \p[\bigg]{\frac{\sum_{J \in K_s(I)} \bfx_J}{\binom{t}{s}}}^{\binom{t}{s}/\binom{t-1}{s-1}}
    \leq 1.
\end{align}
Applying this bound with $t = q$, and recalling that $\rho_{s,q}$ is decreasing, we obtain the bound $f_{s,q,G}(\bfx) \leq \rho_{s,q}(q) k_q(G) < \infty$ for all $\bfx \in \cS_{s,G}$.
In particular, $M_{s,q,G} < \infty$.

We prove this lemma by induction on $n$, the number of vertices of $G$.
Since $\omega(G) \geq q$, we may assume $n \geq q$.
For $n = q$, we can assume $G = K_{q}$.
For every $\bfx \in \cS_{s,G}$, the AM-GM inequality gives
\begin{align*}
    f_{s,q,G}(\bfx)
    &= \rho_{s,q}(q) \bfx_{V}
    \leq \rho_{s,q}(q)\p[\bigg]{\frac{\sum_{I \in K_s(K_q)}\bfx_I}{\binom{q}{s}}}^{\binom{q}{s}/\binom{q-1}{s-1}} = \rho_{s,q}(q) \binom{q}{s}^{-q/s} = 1.
\end{align*}
On the other hand, if $\bfy$ is defined as $\bfy_v = \binom{q}{s}^{-1/s}$ for all $v \in V$, then $\bfy \in \cS_{s,G}$ and
\begin{equation*}
    f_{s,q,G}(\bfy) = \rho_{s,q}(q) \bfy_{V} = \rho_{s,q}(q) \binom{q}{s}^{-q/s} = 1,
\end{equation*}
so the maximum is of $f_{s,q,G}$ is indeed attained in $\cS_{s,G}$,
and moreover $M_{s,q,K_q} = 1$.

Now, assume that the assertion holds for all graphs with $n-1$ vertices or fewer.
If $G$ contains a vertex $v$ not in any $K_s$ of $G$, then $x_v$ does not occur in $f_{s,q,G}$ nor in $h_{s,G}$.
That is to say, we have $f_{s,g,G}(\bfx) = f_{s,q,G - v}(\bfx')$ and $h_{s,G}(\bfx) = h_{s,G-v}(\bfx')$, where $\bfx' = (\bfx_u)_{u \in V(G - v)}$.
Therefore, the assertion holds for $G$ as it holds for $G - v$ by induction.
We now assume that every vertex of $G$ is contained in a copy of $K_s$.

Our goal is to show that there is $\bfy \in \cS_{s,G}$ with $f_{s,q,G}(\bfy) = M_{s,q,G}$.
Consider a sequence $\bfx^{(i)}$ in $\cS_{s,G}$ with $\lim_{i \to \infty}f_{s,q,G}(\bfx^{(i)}) = M_{s,q,G}$.
If for all $v \in V$, the sequence $\bfx^{(i)}_v$ is bounded, then $\bfx^{(i)}$ has an accumulation point $\bfy \in \cS_{s,G}$ and by continuity, $f_{s,q,G}(\bfy) = M_{s,q,G}$ as desired.

The remaining case is when there is a vertex $v \in V$, for which $\bfx^{(i)}_v$ in unbounded.
By our previous assumption, there is a clique $W \in K_s(G)$ with $v \in W$.
If there is some $c > 0$ such that  $\bfx^{(i)}_u > c$ for all $u \in W$, $u \neq v$ and all $i \geq 1$, then we have
\begin{equation*}
    1 \geq \bfx^{(i)}_J > c^{s-1} \bfx^{(i)}_v,
\end{equation*}
which contradicts the fact that $\bfx^{(i)}_v$ is unbounded.

Therefore, there is $u \in W$, $u \neq v$ for which $\liminf_{i \to \infty} \bfx^{(i)}_u = 0$.
We pass to a subsequence where $\bfx_u^{(i)} \to 0$, $\bfx_v^{(i)} \to \infty$ and $f_{s,q,G}(\bfx^{(i)}) \to M_{s,q,G}$ as $i \to \infty$.
Observe that
\begin{align}
    f_{s,q,G}(\bfx^{(i)}) 
    &= \sum_{I \in K_q(G), u \in I} \rho_{s,q}(\sigma_G(I)) \bfx^{(i)}_{I}
    + \sum_{I \in K_q(G), u \notin I} \rho_{s,q}(\sigma_G(I)) \bfx^{(i)}_{I} \nonumber\\
\label{eq:two-sums}
    &= \sum_{I \in K_{q}(G)} \rho_{s,q}(\sigma_G(I)) \bfx^{(i)}_{I \setminus \set{u}} \bfx^{(i)}_u
    + \sum_{I \in K_q(G-u)} \rho_{s,q}(\sigma_G(I)) \bfx^{(i)}_{I}.
\end{align}
To bound the first sum in \eqref{eq:two-sums}, recall from \eqref{eq:bound-cliques} that $\bfx^{(i)}_{I \setminus \set{u}} \leq 1$, and so
\begin{align}
\label{eq:first-bound}
    \sum_{I \in K_{q}(G)} \rho_{s,q}(\sigma_G(I)) \bfx^{(i)}_{I \setminus \set{u}} \bfx^{(i)}_u
    \leq \sum_{I \in K_{q}(G)} \rho_{s,q}(\sigma_G(I)) \bfx^{(i)}_u  \leq  \rho_{s,q}(q) k_q(G) \bfx^{(i)}_{u}.
\end{align}

For the second sum in \eqref{eq:two-sums}, observe that
\begin{align*}
    \sum_{I \in K_q(G-u)} \rho_{s,q}(\sigma_G(I)) \bfx^{(i)}_{I}
    \leq \sum_{I \in K_q(G-u)} \rho_{s,q}(\sigma_{G - u}(I)) \bfx^{(i)}_{I} = f_{s,q,G-u}(\tilde\bfx^{(i)}),
\end{align*}
where $\tilde \bfx^{(i)} = (\bfx^{(i)}_z)_{z \in V(G - u)}$.
The point $\tilde \bfx^{(i)}$ may not be in $\cS_{s,G-u}$, so we may need a rescaling.
To be precise, we need to rule out that $h_{s,G-u}(\tilde \bfx^{(i)}) = 0$.
Indeed, if that is the case, then  $\bfx_J^{(i)} = 0$ for all $J \in K_s(G-u)$, thus $\bfx_I^{(i)} = 0$ for all $I \in K_q(G-u)$.
In particular, $f_{s,q,G-u}$ is identically zero, so combining \eqref{eq:first-bound} with \eqref{eq:two-sums} and taking the limit $i \to \infty$, we have $M_{s,q,G} = 0$.
This implies that $k_q(G) = 0$, a contradiction.

We now assume that $h_{s,G-u}(\tilde \bfx^{(i)}) > 0$ and let $\alpha_i \defined 1/(h_{s,G-u}(\tilde\bfx^{(i)}))^{1/s}$, so that we have $\alpha_i \tilde\bfx^{(i)} \in \cS_{s,G-u}$.
Also notice that $h_{s,G-u}(\tilde \bfx^{(i)}) \leq h_{s,G}(\bfx^{(i)}) \leq 1$.
Therefore, we have
\begin{align}
\label{eq:second-bound}
    f_{s,q,G-u}(\tilde\bfx^{(i)})
    = \p[\big]{ h_{s,G-u}(\tilde\bfx^{(i)})}^{q/s} f_{s,q,G-u}(\alpha_i \tilde\bfx^{(i)})
    \leq f_{s,q,G-u}(\alpha_i \tilde\bfx^{(i)}).
\end{align}

By induction, there is a point $\tilde\bfy \in \cS_{s,G-u}$ at which $f_{s,q,G-u}$ attains the maximum, that is $f_{s,q,G-u}(\tilde\bfy) = M_{s,q,G-u}$.
Combining \eqref{eq:first-bound} and \eqref{eq:second-bound} back in \eqref{eq:two-sums}, we obtain
\begin{align}
\label{eq:prelimit}
    f_{s,q,G}(\bfx^{(i)}) 
    &\leq  \rho_{s,q}(q) k_q(G) \bfx^{(i)}_{u}
    + f_{s,q,G-u}(\tilde\bfy).
\end{align}
Define $\bfy \in \RR^V$ as $\bfy_z = \tilde\bfy_z$ for $z \in V(G-u)$ and $\bfy_u = 0$.
Note that $\bfy \in S_{s,G}$ as $h_{s,G}(\bfy) = h_{s,G-u}(\tilde\bfy)$.
Similarly, $f_{s,q,G-u}(\tilde\bfy) = f_{s,q,G}(\bfy) \leq M_{s,q,G}$.
Therefore, as $i \to \infty$, \eqref{eq:prelimit} implies
\begin{align*}
    M_{s,q,G} \leq f_{s,q,G}(\bfy) \leq M_{s,q,G}.
\end{align*}
Hence, the supremum is indeed attained in $\bfy \in \cS_{s,G}$.
\end{proof}

All the proofs are then complete.


\section{Further directions}

In this paper, we have provided a so called localised version of the graph Maclaurin inequalities.
It would be of great interest to explore further which combinatorial results can be extended in this way.
Malec and Tompkins~\cite{Malec2022-oc} have provided, for instance, localised versions of Erdős-Gallai theorem, the Lubell–Yamamoto–Meshalkin-Bollobás inequality, the Erdős-Ko-Rado theorem and the Erdős-Szekeres theorem on monotone sequences.
Kirsch and Nir~\cite{Kirsch2023-iq} extended this list with several results in generalised Turán problems.
More importantly, we believe that these versions should have interesting applications that are yet to be discovered.

 
\section{Acknowledgements}

The authors are grateful for the continued support of their respective supervisors, Rob Morris and Béla Bollobás.


\begin{bibdiv}
\begin{biblist}

\bib{Bradac2022-uj}{article}{
      author={Bradač, D},
       title={{A generalization of Turán's theorem}},
        date={2022},
      eprint={2205.08923},
}

\bib{Eckhoff2004-fe}{article}{
      author={Eckhoff, J},
       title={{A new Turán-type theorem for cliques in graphs}},
        date={2004},
        ISSN={0012-365X},
     journal={Discrete Mathematics},
      volume={282},
       pages={113\ndash 122},
}

\bib{Fisher1992-is}{article}{
      author={Fisher, D~C},
      author={Ryan, J},
       title={{Bounds on the number of complete subgraphs}},
        date={1992},
        ISSN={0012-365X},
     journal={Discrete Mathematics},
      volume={103},
       pages={313\ndash 320},
}

\bib{Frohmader2008-yx}{article}{
      author={Frohmader, A},
       title={{Face vectors of flag complexes}},
        date={2008},
        ISSN={0021-2172, 1565-8511},
     journal={Israel Journal of Mathematics},
      volume={164},
       pages={153\ndash 164},
}

\bib{Hardy1988-eh}{book}{
      author={Hardy, G~H},
      author={Littlewood, J~E},
      author={Pólya, G},
       title={{Inequalities}},
     edition={2nd edition},
      series={Cambridge Mathematical Library},
   publisher={Cambridge University Press},
        date={1988},
        ISBN={9780521358804},
}

\bib{Khadzhiivanov1977-xg}{article}{
      author={Khadzhiivanov, N},
       title={{Inequalities for graphs}},
    language={in Russian},
        date={1977},
        ISSN={0366-8681},
     journal={Comptes rendus de l'Academie Bulgare des sciences},
      volume={30},
       pages={793\ndash 796},
}

\bib{Kirsch2023-iq}{article}{
      author={Kirsch, R},
      author={Nir, J~D},
       title={{A localized approach to generalized Turán problems}},
        date={2023},
      eprint={2301.05678},
}

\bib{Krivelevich2022-sa}{techreport}{
      author={Krivelevich, M},
      author={Morris, R},
      author={Riordan, O},
      author={Steger, A},
       title={{Combinatorics, Probability and Computing}},
   publisher={Mathematisches Forschungsinstitut Oberwolfach},
 institution={Mathematisches Forschungsinstitut Oberwolfach},
        date={2022},
      number={22/2022},
}

\bib{Malec2022-oc}{article}{
      author={Malec, D},
      author={Tompkins, C},
       title={{Localized versions of extremal problems}},
        date={2022},
      eprint={2205.12246},
}

\bib{Motzkin1965-rx}{article}{
      author={Motzkin, T~S},
      author={Straus, E~G},
       title={{Maxima for graphs and a new proof of a theorem of Turán}},
        date={1965},
        ISSN={0008-414X, 1496-4279},
     journal={Canadian Journal of Mathematics},
      volume={17},
       pages={533\ndash 540},
}

\bib{Nikiforov2006-us}{article}{
      author={Nikiforov, V},
       title={{An extension of Maclaurin's inequalities}},
        date={2006},
      eprint={math/0608199},
}

\bib{Petingi2000-qw}{article}{
      author={Petingi, L},
      author={Rodriguez, J},
       title={{A new proof of the Fisher-Ryan bounds for the number of cliques
  of a graph}},
        date={2000},
     journal={Congressus Numerantium},
      volume={146},
       pages={143\ndash 146},
}

\bib{Sidorenko1987-rp}{article}{
      author={Sidorenko, A~F},
       title={{The maximal number of edges in a homogeneous hypergraph
  containing no prohibited subgraphs}},
        date={1987},
        ISSN={1573-8876},
     journal={Mathematical notes of the Academy of Sciences of the USSR},
      volume={41},
      number={3},
       pages={247\ndash 259},
}

\bib{Sos1982-as}{article}{
      author={Sós, V~T},
      author={Straus, E~G},
       title={{Extremals of functions on graphs with applications to graphs and
  hypergraphs}},
        date={1982},
        ISSN={0095-8956},
     journal={Journal of Combinatorial Theory. Series B},
      volume={32},
       pages={246\ndash 257},
}

\bib{Turan1941-fi}{article}{
      author={Turán, P},
       title={{On an extremal problem in graph theory}},
    language={in Hungarian},
        date={1941},
     journal={Mat. Fiz. Lapok},
      volume={48},
       pages={436\ndash 452},
}

\bib{Zykov1949-qr}{article}{
      author={Zykov, A~A},
       title={{On some properties of linear complexes}},
    language={in Russian},
        date={1949},
     journal={Matematicheskii Sbornik, Novaya Seriy},
      volume={24(66)},
       pages={163\ndash 188},
}

\end{biblist}
\end{bibdiv}


\end{document}